\documentclass[reqno, a4paper, 12pt]{amsart}

\usepackage{stmaryrd}
\usepackage{mathrsfs}
\usepackage{multirow}
\usepackage[english]{babel}
\usepackage[utf8]{inputenc}
\usepackage{amsmath, tabu}
\usepackage{amsthm}
\usepackage{amssymb}
\usepackage{tikz-cd}
\usepackage{graphicx}
\usepackage[colorinlistoftodos]{todonotes}
\usepackage{enumerate}
\usepackage{hyperref}
\usepackage{extarrows}
\usepackage{bm}
\usepackage{xcolor}
\usepackage{soul}
\usepackage{url}
\hypersetup{
	colorlinks,
	linkcolor={red!100!black},
	citecolor={blue!100!black},
	urlcolor={green!100!black}
}
\newtheorem{thm}{Theorem}[section]

\newtheorem{lemma}[thm]{Lemma}
\newtheorem{cor}[thm]{Corollary}

\theoremstyle{definition}
\newtheorem{defn}[thm]{Definition}

\newtheorem{ex}[thm]{Example}
\theoremstyle{remark}
\newtheorem{rmk}[thm]{Remark}


\newtheoremstyle{named}{}{}{\itshape}{}{\bfseries}{.}{.5em}{#1 \thmnote{#3}}
\theoremstyle{named}

\newtheoremstyle{named}{}{}{\itshape}{}{\bfseries}{.}{.5em}{#1 \thmnote{#3}}
\theoremstyle{named}

\newtheoremstyle{named}{}{}{\itshape}{}{\bfseries}{.}{.5em}{#1 \thmnote{#3}}
\theoremstyle{named}

\newcommand{\Q}{\mathbb{Q}}
\newcommand{\Z}{\mathbb{Z}}

\newcommand{\C}{\mathbb{C}}


\newcommand{\frakp}{\mathfrak{p}}

\newcommand{\frakN}{\mathfrak{N}}

\newcommand{\scrL}{\mathscr{L}}

\newcommand{\calO}{\mathcal{O}}
\newcommand{\Sel}{\text{Sel}}

\newcommand{\cyc}{\text{cyc}}

\DeclareMathOperator{\Gal}{Gal}


\usepackage{xcolor}
\usepackage[foot]{amsaddr}
\usepackage{academicons}
\newcommand{\orcid}[1]{\href{https://orcid.org/#1}{\textcolor[HTML]{A6CE39}{\aiOrcid}}}

\title{On $\lambda$-invariants of congruent modular forms in the anticyclotomic indefinite setting}
\author[Dac-Nhan-Tam Nguyen]{Dac-Nhan-Tam Nguyen}
\email{tamnguyen@math.ubc.ca}
\address{Department of Mathematics, University of British Columbia \\
	1984 Mathematics Road, Vancouver, BC V6T 1Z2, Canada \newline ORCiD: 0000-0002-3117-5028}

\subjclass[2020]{11R23 (primary); 11G40 (secondary)}
\keywords{Anticyclotomic Iwasawa theory, congruent modular forms, $p$-adic $L$-functions, Heegner cycles.}

\date{\today}

\numberwithin{equation}{section}

\newcommand{\scrP}{\mathscr{P}}
\newcommand{\ord}{\text{ord}}

\newcommand{\vbar}{\overline{v}}

\newcommand{\Gr}{\text{Gr}}
\newcommand{\scrF}{\mathscr{F}}
\newcommand{\boldA}{\mathbf{A}}
\newcommand{\boldT}{\mathbf{T}}

\newcommand{\Tam}{\text{Tam}}
\newcommand{\frakP}{\mathfrak{P}}
\newcommand{\length}{\text{length}}
\newcommand{\coker}{\text{coker}}
\newcommand{\loc}{\text{loc}}
\newcommand{\tors}{\text{tors}}

\newcommand{\calL}{\mathcal{L}}

\DeclareFontFamily{U}{wncy}{}
\DeclareFontShape{U}{wncy}{m}{n}{<->wncyr10}{}
\DeclareSymbolFont{mcy}{U}{wncy}{m}{n}
\DeclareMathSymbol{\Sha}{\mathord}{mcy}{"58}

\makeatletter
\newcommand{\leqnomode}{\tagsleft@true}
\newcommand{\reqnomode}{\tagsleft@false}
\makeatother

\title[Cong. modular forms in anticyc. indefinite setting]{On $\lambda$-invariants of congruent modular forms in the anticyclotomic indefinite setting}
\begin{document}
\maketitle
\begin{abstract}
    We give a precise, computable formula for comparing $\lambda$-invariants between modular forms in the anticyclotomic indefinite setting where the Selmer groups have positive rank. This is an improvement of Hatley-Lei \cite{HL19, HL21} where the authors give a formula with incomputable error terms.
\end{abstract}
\section{Introduction} \label{sec:intro}
Let $K = \Q(\sqrt{-D})$ be an imaginary quadratic field of class number $h_K$ and $p$ be a prime number that is split in $K$ as $(p) = \frakp \overline{\frakp}$. Let $f \in S_{2r}(\Gamma_0(N))$ be a newform that is ordinary at $p$ (i.e. $a_p(f) \in \Z_p^\times$) whose coefficients lie in some finite extension $\mathfrak{F}$ of $\Q_p$. Assume that $f$ satisfies the Heegner hypothesis 
\[\label{Heeg}
\text{every prime } \ell \mid N \text{ is split in $K/\Q$} \\ 
\tag{Heeg.}\]
as well as the admissibility condition
\[\begin{cases} \label{adm}
	p  \text{ does not ramify in } \mathfrak{F} \\
	p \nmid 6 (2r - 1)! N \phi(N) h_K \\
	\text{if $r = 1$ then } a_p(f)^2 \not \equiv 1 \pmod{p}.
\end{cases} \tag{admiss.}\]

Attached to $f$ is a $2$-dimensional self-dual Galois representation $V_f$, which is the $r^{th}$ Tate-twist of the Galois representation constructed by Deligne. Denote this representation by
\[\rho_f: G_{\Q} \rightarrow GL_2(\mathfrak{F})\]
Let $\calO$ be the ring of integers in $\mathfrak{F}$ with uniformizer $\varpi$ and  
$\bar{\rho}_f: G_\Q \rightarrow GL_2(\kappa)$ be the residual representation of $\rho_f$, where $\kappa$ is the residue field of $\calO$. We also make the assumption
\[ \label{irr} \bar{\rho}_f \text{ is absolutely irreducible} \tag{irred.}\]

In this setting, the dual Selmer group over the anticyclotomic extension $X(K, \boldA_f)$ (defined in Section \ref{sec:prelim}) has rank $1$ as a module over the Iwasawa algebra \cite{How04b}. It is natural to ask how the Iwasawa $\mu$ and $\lambda$-invariants of the cotorsion part $X(K, \boldA_f)_{\tors}$ vary in a family of modular forms with isomorphic residual representations. 

When the dual Selmer group is torsion over the Iwasawa algebra, this type of congruence question has been studied extensively for both the cyclotomic \cite{GV00, EPW06} and anticyclotomic \cite{PW11, Kim17, CKL17} extensions. On the algebraic side of these papers, the prevalent method is to consider the \textit{imprimitive residual Selmer group} defined for $\bar{\rho}_f$ with local conditions at bad primes omitted. 

Alternatively, one may also look at \textit{the residual Selmer group} to obtain similar results in various contexts in both the cyclotomic \cite{LS18, NS23} and anticyclotomic \cite{HL21} settings. 

In fact, the paper \cite{HL21} studies congruence questions in the same context as our paper. However, the result in \cite{HL21} contains incomputable error terms for comparing the $\lambda$-invariants between residually isomorphic modular forms (see \cite[Theorem 4.6]{HL21}). The article addresses this gap with the following main result:

\begin{thm}[Theorem \ref{thm:lambda-inv}]
	Let $f_1 \in S_{2r_1}(\Gamma_0(N_1)), f_2 \in S_{2r_2}(\Gamma_0(N_2))$ be modular forms that satisfy \ref{Heeg}, \ref{adm} and \ref{irr}. Assume that $\bar{\rho}_{f_1} \simeq \bar{\rho}_{f_2}$ and $\mu(f_1)  = \mu(f_2) = 0$. Moreover, assume that both $X(K, \boldA_{f_1}), X(K, \boldA_{f_2})$ do not have any finite non-zero submodule. Then
	\begin{equation*}
		\lambda(f_1) + 2 \sum_{\ell \mid N_1 N_2} \lambda_\ell(f_1) = \lambda(f_2) + 2 \sum_{\ell \mid N_1 N_2} \lambda_\ell(f_2).
	\end{equation*}
    where $\lambda_\ell(f_i)$ are local constants defined in Definition \ref{defn:local-lambda}.
\end{thm}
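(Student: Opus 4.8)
The plan is to follow the congruence-of-Selmer-groups method of Greenberg--Vatsal and Emerton--Pollack--Weston, but carried out in the indefinite (i.e.\ $\Lambda$-rank one) situation. Let $\Sigma$ be the finite set of primes of $K$ lying above $N_1 N_2$; by \ref{Heeg} every such prime is split over $\Q$. For $i = 1, 2$ I would introduce the $\Sigma$-imprimitive Selmer group $\Sel^\Sigma(K, \boldA_{f_i}) \supseteq \Sel(K, \boldA_{f_i})$, obtained by imposing \emph{no} local condition at the places of $\Sigma$ while keeping the Greenberg/ordinary condition at $p$ and the unramified condition everywhere else, and let $X^\Sigma(K, \boldA_{f_i})$ be its Pontryagin dual. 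Relaxing finitely many non-$p$ conditions only enlarges the Selmer group by a $\Lambda$-torsion amount, so $X^\Sigma(K, \boldA_{f_i})$ again has $\Lambda$-rank one (the rank-one statement being Howard's, \cite{How04b}); write $\lambda^\Sigma(f_i)$ for the $\lambda$-invariant of $X^\Sigma(K, \boldA_{f_i})_{\tors}$. The theorem would then follow from two independent statements: (a) $\lambda^\Sigma(f_i) = \lambda(f_i) + 2\sum_{\ell \mid N_1 N_2} \lambda_\ell(f_i)$, and (b) $\lambda^\Sigma(f_1) = \lambda^\Sigma(f_2)$.

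For (a), I would run the global-duality/Poitou--Tate exact sequence comparing $\Sel$ and $\Sel^\Sigma$: the inclusion $\Sel(K, \boldA_{f_i}) \hookrightarrow \Sel^\Sigma(K, \boldA_{f_i})$ dualizes to a surjection $X^\Sigma(K, \boldA_{f_i}) \twoheadrightarrow X(K, \boldA_{f_i})$ whose kernel is the Pontryagin dual of $\bigoplus_{w \in \Sigma} \mathcal{H}_w(f_i)$, with $\mathcal{H}_w(f_i) = H^1(K_{\infty, w}, \boldA_{f_i})$, the cokernel of the relevant global-to-local map vanishing in the indefinite setting exactly as in \cite{HL21}. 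Each $\ell \mid N_1 N_2$ contributes its two primes $w, \bar w$ of $K$, and complex conjugation, which interchanges $w$ and $\bar w$ and inverts $\Gamma$, yields a $\Lambda$-semilinear isomorphism $\mathcal{H}_w(f_i) \cong \mathcal{H}_{\bar w}(f_i)$; hence the two summands share a common $\lambda$-invariant, namely $\lambda_\ell(f_i)$ of Definition \ref{defn:local-lambda}, which is the source of the factor of $2$. Condition \ref{adm} guarantees $\mu = 0$ for each $\mathcal{H}_w(f_i)^\vee$, so $\bigoplus_w \mathcal{H}_w(f_i)^\vee$ sits inside $X^\Sigma(K, \boldA_{f_i})_{\tors}$ as a $\Lambda$-torsion submodule with quotient $X(K, \boldA_{f_i})_{\tors}$; additivity of the $\lambda$-invariant along this short exact sequence of torsion $\Lambda$-modules with vanishing $\mu$ then gives (a).

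For (b), the point is that $\Sel^\Sigma(K, \boldA_{f_i})[\varpi] = \Sel^\Sigma(K, \boldA_{f_i}[\varpi])$ depends only on the residual datum $\bar\rho_{f_i}|_{G_{K_\infty}}$: by \ref{irr} one has $H^0(K_\infty, \boldA_{f_i}[\varpi]) = 0$, which makes passage to $\varpi$-torsion compatible with the Selmer conditions, and by \ref{adm} (in particular $p \nmid 6(2r-1)!\, N \phi(N) h_K$, together with $a_p(f)^2 \not\equiv 1 \pmod p$ when $r = 1$) the Greenberg line at $p$ is canonically carved out of $\bar\rho_{f_i}$; the condition at $\Sigma$ is empty and the condition away from $\Sigma p$ is unramified, both intrinsic to $\bar\rho_{f_i}$. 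Since $\bar\rho_{f_1} \simeq \bar\rho_{f_2}$ this gives an isomorphism of $\Lambda$-modules $\Sel^\Sigma(K, \boldA_{f_1})[\varpi] \cong \Sel^\Sigma(K, \boldA_{f_2})[\varpi]$, equivalently $X^\Sigma(K, \boldA_{f_1})/\varpi \cong X^\Sigma(K, \boldA_{f_2})/\varpi$. As $\mu(f_i) = 0$ forces $\mu = 0$ on $X^\Sigma(K, \boldA_{f_i})_{\tors}$ as well, and as the hypothesis that $X(K, \boldA_{f_i})$ --- hence $X^\Sigma(K, \boldA_{f_i})$ --- has no nonzero finite $\Lambda$-submodule, combined with Howard's structural description \cite{How04b}, forces the torsion-free quotient $X^\Sigma(K, \boldA_{f_i})/X^\Sigma(K, \boldA_{f_i})_{\tors}$ to be \emph{free} of rank one, reduction modulo $\varpi$ reads off exactly the torsion part: the $\kappa[[T]]$-torsion submodule of $X^\Sigma(K, \boldA_{f_i})/\varpi$ has length $\lambda^\Sigma(f_i)$. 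Comparing the two isomorphic reductions yields (b), and with (a) the theorem.

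The step I expect to be the real obstacle --- and precisely the one where the argument must improve on \cite[Theorem 4.6]{HL21} --- is this last point: pinning down the contribution of the rank-one free part of $X^\Sigma(K, \boldA_{f_i})$. In the torsion-coefficient setting of \cite{EPW06, PW11} one simply invokes the identity ``$\lambda = \calO$-corank of the $\varpi$-torsion of the Selmer group'', but here the free summand spoils that, and one must show that (i) this summand is literally $\cong \Lambda$ rather than a non-principal torsion-free-rank-one module --- this is exactly what ``no nonzero finite submodule'' plus Howard's structure theory is engineered to deliver, and it is what makes the error term vanish rather than merely being bounded --- and (ii) it is unaffected by the congruence, so that only the torsion parts, which \emph{are} controlled by $\bar\rho$, enter the comparison. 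The subsidiary technical points --- exactness and vanishing in the Poitou--Tate sequence of step (a) in the indefinite case, and the $\mu = 0$ and finite-submodule-freeness of the local modules $\mathcal{H}_w(f_i)$ --- follow from \ref{adm} and global duality along the lines of \cite{HL21} and I do not expect them to be serious difficulties.
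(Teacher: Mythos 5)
Your strategy is the Greenberg--Vatsal imprimitive-Selmer-group route, which is genuinely different from the paper's, and it breaks at exactly the point where the indefinite setting differs from the definite one. In your step (a) you assert that the global-to-local map has vanishing cokernel, so that $X^\Sigma(K,\boldA_{f_i})\twoheadrightarrow X(K,\boldA_{f_i})$ has kernel the full direct sum $\bigoplus_{w}\mathcal{H}_w(f_i)^\vee$. By Poitou--Tate the kernel is not that direct sum but its quotient by the image of the localization map $\Sel(K,\boldT_{f_i})\to\bigoplus_{w\in\Sigma,\,w\nmid p}H^1_{\Gr}(K_{\infty,w},\boldT_{f_i})$. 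In the cotorsion (definite or cyclotomic) setting this image vanishes because the compact Selmer group is controlled by the cotorsion of its dual --- this is why \cite{GV00, EPW06, PW11} can argue as you propose --- but here $\Sel(K,\boldT_{f_i})$ has $\Lambda$-rank one by \cite{How04b}, its localizations at the primes of your $\Sigma$ land in torsion modules and have no reason to vanish, and the resulting correction term is precisely the kind of incomputable error that \cite[Theorem 4.6]{HL21} suffered from and that the theorem is designed to eliminate. So your identity $\lambda^\Sigma(f_i)=\lambda(f_i)+2\sum_{\ell}\lambda_\ell(f_i)$ is unproved and in general false as stated.

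The paper sidesteps this by never relaxing conditions away from $p$: it relaxes at $\frakp$ and imposes the strict condition at $\overline{\frakp}$, giving the BDP Selmer group $X_{\emptyset,0}(K,\boldA)$, which \emph{is} $\Lambda$-torsion. Castella's formula $\ord_{\frakP}X_{\emptyset,0}=2\length_{\frakP}(\coker(\loc_\frakp))+\ord_{\frakP}X_{\tors}$ converts the rank-one problem into a torsion one; Lemma \ref{lem:loc_p} (this is where $\mu=0$ and the no-finite-submodule hypothesis actually enter) shows $\coker(\loc_\frakp)$ is determined by $\bar\rho$, hence equal for $f_1$ and $f_2$; and the imprimitive comparison carrying the $2\sum_\ell\lambda_\ell$ terms is then quoted for the torsion BDP Selmer groups from \cite[Corollary 3.8]{LMX23}. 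A secondary gap sits in your step (b): a torsion-free rank-one $\Lambda$-module with no finite submodule need not be free (the maximal ideal $(\varpi,T)$ is the standard example), and its reduction mod $\varpi$ acquires extra $\kappa[[T]]$-torsion, so ``no finite submodule plus Howard's structure theory'' does not by itself let you read $\lambda^\Sigma$ off of $X^\Sigma/\varpi$. You correctly flag this as the crux, but the paper's resolution is to avoid confronting it for the Greenberg Selmer group altogether.
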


For $i \in \{1, 2\}$, the invariants $\mu(f_i), \lambda(f_i)$ denote the Iwasawa $\mu$ and $\lambda$ invariants of $f_i$. The novelty in this article is the improvement of existing techniques in the literature that can only effectively address the cotorsion case. Not only do we obtain a precise formula for the $\lambda$-invariants but we also supplement our result with a concrete example in Section \ref{sec:ex} for two elliptic curves with isomorphic $p$-torsion as Galois modules. This computation relies on the author's previous result regarding non-existence of finite non-zero submodules of the dual Selmer group \cite[Theorem 4.19]{NS25}.

It is also worth mentioning that the authors of \cite{HL21} noticed an oversight in their result and provided an erratum attached to Appendix \ref{sec:appendix} of this article.

\section{Preliminaries} \label{sec:prelim}

For the $p$-adic representation $V = V_f$ attached to $f$, there exists a $G_\Q$-stable filtration
$$0 \rightarrow \scrF^+ V \rightarrow V \rightarrow \scrF^- V \rightarrow 0$$

where $\scrF^+V$ and $\scrF^-V$ are both $1$-dimensional representations. Let $T$ be a $G_\Q$-stable lattice in $V$ and let $A = V/T$. We also define $\scrF^+ T = T \cap \scrF^+ V$, $\scrF^- T = T/ \scrF^+ T$, and $\scrF^+ A = \scrF^+ V/ \scrF^+ T$, $\scrF^- A = A/ \scrF^+ A$.  

Recall the following local conditions above $p$, where $M$ is $A, A[\varpi],  V$ or $T$. Whenever it is important to specify the underlying modular form, we will instead use the notation $A_f, V_f$ and $T_f.$
 Let $F/K$ be a finite extension, and let $v$ be a prime of $F$. 
\begin{defn}
	The Greenberg local condition is defined as 
	\[H^1_{\Gr}(F_v, M) := \begin{cases}
		\ker \left(H^1(F_v, M) \rightarrow H^1(F_v^{nr}, \scrF^- M)\right) & \text{if } v \mid p,\\
		\ker \left(H^1(F_v, M) \rightarrow H^1(F_v^{nr}, M) \right) & \text{if otherwise}.
	\end{cases}\]
\end{defn}

\begin{defn}
	For $v \mid p$ and $\scrL_v \in \{\emptyset, \Gr, 0\}$, set
	$$H^1_{\scrL_v}(F_v, M):= \begin{cases}
		H^1(F_v, M) & \text{if } \scrL_v = \emptyset, \\
		H^1_{\Gr}(F_v, M) & \text{if } \scrL_v = \Gr, \\
		\{0\} & \text{if } \scrL_v = 0. 
	\end{cases}$$
\end{defn} 

Let $\Sigma$ be a finite set of primes of $K$ dividing the primes where $V$ is ramified as well as the primes dividing $p\infty$. We will denote by $F_\Sigma$ the maximal extension of $F$ unramified outside of the set of primes dividing the primes in $\Sigma$. 

\begin{defn} For a set of local conditions $\scrL  = \{\scrL_v\}_{v \mid p}$, we define
	\[\Sel_{\scrL} (F, M) = \ker \left(H^1(F_\Sigma/F, M) \rightarrow \prod_{v \mid \Sigma, v \nmid p} \frac{H^1(F_v, M)}{H^1_{\Gr}(F_v, M)} \times \prod_{v \mid p} \frac{H^1(F_v, M)}{H^1_{\scrL_v}(F_v, M)} \right)\]
	and let $X_{\scrL}(F, M)$ be the Pontryagin dual of $\Sel_{\scrL}(F, M)$.
\end{defn}

\begin{rmk}
    The Selmer group $\Sel_{\scrL}(F, A[\varpi])$ is called the residual Selmer group, for which $\Sigma$ can be restricted to the set of primes dividing $p\infty$ and the conductor of the residual representation $A[\varpi].$
\end{rmk}
Also let
\[ \Sel_{\scrL}(K_\infty, A) := \varinjlim_{K \subset F \subset K_\infty} \Sel_{\scrL}(F, A),\]
\[\Sel_{\scrL}(K_\infty, T) := \varprojlim_{K \subset F \subset K_\infty} \Sel_{\scrL}(F, T)\]
with compatible local conditions $\scrL$ where $F/K$ runs over finite subextensions contained in $K_\infty/K$.
 
 Denote by $\Lambda$ the Iwasawa algebra $\calO \llbracket \Gal(K_\infty/K) \rrbracket$ and define $\mathbf{T} := T \otimes \Lambda$, and $\mathbf{A} := A \otimes \Lambda^\ast$, where $\Lambda^\ast$ is the Pontryagin dual of $\Lambda$. If the modular form $f$ is not clear from context, we will use the  notation $\boldA_f$ and $\boldT_f$.
 
 There are isomorphisms \[\Sel_{\scrL}(K, \boldA) \simeq \Sel_{\scrL}(K_\infty, A)\] \[\Sel_{\scrL}(K, \boldT) \simeq \Sel_{\scrL}(K_\infty, T).\] 

Let the Pontryagin dual of $\Sel_{\scrL}(K, \boldA)$ be denoted by $X_\scrL(K, \boldA)$. The Iwasawa $\mu$ and $\lambda$-invariants of $X_{\scrL}(K, \boldA)$ as a $\Lambda$ will be denoted by $\mu_{\scrL}(f)$ and $\lambda_{\scrL}(f)$, respectively. 

\begin{rmk}
	The Selmer group $\Sel_{\Gr, \Gr}(K, \boldA)$ is also known as the Greenberg Selmer group \cite{Gre99}. In this case we will simply drop the local conditions and denote $\Sel_{\Gr, \Gr}(K, \boldA)$ as $\Sel(K, \boldA)$. We also apply this convention for other notation with local conditions $(\Gr, \Gr)$.
\end{rmk}

\begin{rmk}
	The Selmer group $\Sel_{\emptyset, 0}(K, \boldA)$ is known as the Bertolini-Darmon-Prasanna (BDP) Selmer group due to its connection with the Bertolini-Darmon-Prasanna $p$-adic $L$-functions (see \cite{Cas13}, \cite{BDP13}).
\end{rmk}


\section{Comparing $\lambda$-invariants of congruent modular forms}

For a modular form $f \in S_{2r}(\Gamma_0(N))$ satisfying the hypotheses \ref{Heeg}, \ref{adm} and \ref{irr}, we have $\mu(f) = \mu_{\emptyset, 0}(f) = 0$ \cite[Theorem 4.5]{HL21}. Moreover,
\begin{equation*}
	\ord_{\frakP}{X_{\emptyset, 0}}(K, \boldA) = 2 \cdot \length_{\frakP}(\coker(\loc_{\frakp})) + \ord_{\frakP}{X(K, \boldA)_{\tors}}
\end{equation*}where $\loc_{\frakp}$ is the localization map $\loc_\frakp: \Sel(K, \boldT) \rightarrow H^1_{\Gr}(K_\frakp, \boldT)$ and $\frakP$ is a height one prime ideal of $\Lambda$ \cite[Lemma A.4]{Cas17}.

Our result relies on the following

\begin{lemma} \label{lem:loc_p}
	Suppose that $X(K, \boldA)_{\tors}$ does not have any finite non-zero submodule. Then for a height one prime ideal $\frakP$ of $\Lambda$, $\length_{\frakP}(\coker(\loc_\frakp))$ depends only on the residual representation $\bar{\rho}_f.$
\end{lemma}
	\begin{proof}
	Denote by $\overline{\loc}_\frakp$ the residual map $ \Sel(K, \boldT/\varpi) \rightarrow H^1_{\Gr}(K_\frakp, \boldT/\varpi)$, which only depends on $\bar{\rho}_f$. We show that $\coker(\loc_\frakp)/\varpi \simeq  \coker(\overline{\loc}_\frakp)$.
	By global duality, there are  short exact sequences
	\begin{equation} \label{eqn:global-dual}
		{0} \rightarrow {\coker(\loc_\frakp)}  \rightarrow {X_{\emptyset, \Gr}(K, \boldA)}  \rightarrow {X(K, \boldA)} \rightarrow {0},
	\end{equation}
	\begin{equation*}
		{0} \rightarrow {\coker(\overline{\loc}_\frakp)}  \rightarrow {X_{\emptyset, \Gr}(K, \boldA[\varpi])}  \rightarrow {X(K, \boldA[\varpi])} \rightarrow {0}.
	\end{equation*}
	
	Our assumptions $\mu(f) = 0$ and $X(K, \boldA)$ does not have any finite non-zero submodule imply $\text{Tor}_1^{\Lambda}(X(K, \boldA), \Omega) = X(K, \boldA)[\varpi] = 0$. In particular, tensoring (\ref{eqn:global-dual}) with $\Omega = \Lambda/\varpi$ gives a short exact sequence
	\begin{equation*}
		{0} \rightarrow {\coker(\loc_\frakp)} \otimes \Omega  \rightarrow {X_{\emptyset, \Gr}(K, \boldA)} \otimes \Omega  \rightarrow {X(K, \boldA)} \otimes \Omega \rightarrow {0}
	\end{equation*}
    
	  We examine the following commutative diagram:
	\begin{center}
		\begin{tikzcd}[cramped, column sep = small]
			{0} \arrow[r] & {\coker(\loc_\frakp) \otimes \Omega} \arrow[r] \arrow[d, "\alpha"] & {X_{\emptyset, \Gr}(K, \boldA) \otimes \Omega} \arrow[r] \arrow[d, "\beta_{\emptyset, \Gr}"] & {X(K, \boldA) \otimes \Omega} \arrow[d, "\beta_{\Gr, \Gr}"] \arrow[r] & {0} \\
			{0} \arrow[r] & {\coker(\overline{\loc}_\frakp)} \arrow[r] & {X_{\emptyset, \Gr}(K, \boldA[\varpi])} \arrow[r]           & {X(K, \boldA[\varpi])} \arrow[r]           & {0}.
		\end{tikzcd}
	\end{center}
    
	Note that $\ker(\beta_{\emptyset, \Gr}), \ker(\beta_{\Gr, \Gr})$ are both finite and \begin{equation*}
		\dim_{\kappa} \ker(\beta_{\emptyset, \Gr}) = \dim_{\kappa} \ker(\beta_{\Gr, \Gr}) = \sum_{v \mid N} \dim_{\kappa} A_f^{I_v}/\varpi A_f^{I_v}
	\end{equation*} by Theorem $\ref{thm:control}$ in the Appendix.
	Hence, it suffices to show that $\alpha$ is surjective, and the conclusion follows. To this end, it is enough to show surjectivity of the map \[H^1_{\Gr}(K_\frakp, \boldT)/\varpi  \rightarrow H^1_{\Gr}(K_\frakp, \boldT/\varpi).\] 
    But this is  clear, as the cokernel of $H^1(K_\frakp, \scrF^+ \boldT)/\varpi  \rightarrow H^1(K_\frakp, \scrF^+ \boldT/\varpi)$ is $H^2(K_\frakp, \scrF^{+} \boldT) \simeq H^0(K_\frakp, \scrF^{-} \boldA)$ by local Tate duality, which is trivial under our assumptions.
\end{proof}

\begin{rmk}
    The sum \begin{equation*}
		\dim_{\kappa} \ker(\beta_{\emptyset, \Gr}) = \dim_{\kappa} \ker(\beta_{\Gr, \Gr}) = \sum_{v \mid N} \dim_{\kappa} A_f^{I_v}/\varpi A_f^{I_v}
	\end{equation*}
    in the proof of Lemma \ref{lem:loc_p} is a finite sum because every prime dividing $N$ is split in $K$ (the \ref{Heeg} hypothesis) and is therefore finitely decomposed in $K_\infty$.
\end{rmk}

\begin{defn} \label{defn:Euler}
	For a rational prime $\ell$, let $v$ be a prime $\ell$ in $K$ and define $P_v(f)(X) = \det(1 - X \cdot \gamma_v | V_f^{I_v})$ where $\gamma_v \in \Gal(K_\infty/K)$ is the Frobenius at $v$. Let $\scrP_v(f) \in \Z_p \llbracket \Gal(K_\infty/K) \rrbracket$ such that 
    $\scrP_v(f) = P_v(f)(N(v)^{-1} \gamma_v)$. If $\ell$ is split in $K$ then
    \[\scrP_v(f) = \begin{cases}
		1 - a_\ell(f) \ell^{-1} \cdot \gamma_v + \ell^{-1} \cdot \gamma_v^2  & \text{ if } \ell \nmid N, \\
		1 - a_\ell(f) \ell^{-1} \cdot \gamma_v & \text{ if } \ell \mid N.
	\end{cases}\]		
\end{defn}	

\begin{defn} \label{defn:local-lambda}
 When $\ell$ is split in $K$ as $(\ell) = v \bar{v}$, the $\lambda$-invariants of $\scrP_v(f)$ and $\scrP_{\bar{v}}(f)$ are equal and we will denote this $\lambda$-invariant by $\lambda_\ell(f).$   
\end{defn}

It is also important to note that in this case, 
\[\mu(\scrP_v(f)) = \mu(\scrP_{\vbar}(f)) = 0.\]
Moreover, $\gamma_v$ is a topological generator of $\Gal(K_{\infty, v}/K_v)$. There is an isomorphism $\Z_p \llbracket \Gal(K_{\infty, v}/K_v) \rrbracket \simeq \Z_p \llbracket T \rrbracket$ sending $\gamma_v \mapsto T.$ If we let $X: = \ell^{-1}(T + 1)$ then 
\[\scrP_v(f)(X) = \begin{cases}
1 - a_\ell(f) X + \ell \cdot X^2  & \text{ if } \ell \nmid N, \\
1 - a_\ell(f) X  & \text{ if } \ell \mid N.
\end{cases}\]
and the $\lambda$-invariant of $\scrP_v(f)$ as an element of $\Z_p \llbracket \Gal(K_{\infty, v}/K_v) \rrbracket$ is given by the multiplicity of $X = \ell^{-1}$ as a root of $\scrP_v(f)(X) \pmod{\varpi}$. The same statement applies to the conjugate prime $\vbar$. Finally, we note that $\scrP_v(f)(X)$ does not depend on $v$.

\begin{thm} \label{thm:lambda-inv}
	Let $f_1 \in S_{2r_1}(\Gamma_0(N_1)), f_2 \in S_{2r_2}(\Gamma_0(N_2))$ be modular forms that satisfy \ref{Heeg}, \ref{adm} and \ref{irr}. Assume that $\bar{\rho}_{f_1} \simeq \bar{\rho}_{f_2}$ and $\mu(f_1)  = \mu(f_2) = 0$. Moreover, assume that $X(K, \boldA_{f_1}), X(K, \boldA_{f_2})$ do not have any finite non-zero submodules. Then
	\begin{equation*}
		\lambda(f_1) + 2 \sum_{\ell \mid N_1 N_2} \lambda_\ell(f_1) = \lambda(f_2) + 2 \sum_{\ell \mid N_1 N_2} \lambda_\ell(f_2).
	\end{equation*}
\end{thm}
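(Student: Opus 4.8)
The plan is to reduce the comparison to the Bertolini--Darmon--Prasanna (BDP) Selmer group $\Sel_{\emptyset,0}$ and an imprimitive variant of it, exploiting that these are $\Lambda$-cotorsion with vanishing $\mu$-invariant and that, after reduction modulo $\varpi$, they are governed by data depending only on $\bar{\rho}_f$. First I would fix $\Sigma_0 = \{\, v \mid \ell : \ell \mid N_1 N_2 \,\}$; by \ref{Heeg} every such $\ell$ splits in $K$, and $\Sigma_0$ is literally the same finite set of primes of $K$ for $f_1$ and $f_2$. Let $\Sel^{\Sigma_0}_{\emptyset,0}(K,\boldA_f)$ be the imprimitive BDP Selmer group obtained from $\Sel_{\emptyset,0}(K,\boldA_f)$ by deleting the Greenberg conditions at the primes of $\Sigma_0$, with Pontryagin dual $X^{\Sigma_0}_{\emptyset,0}(K,\boldA_f)$. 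Global duality furnishes the exact sequence
\[
0 \to \Sel_{\emptyset,0}(K,\boldA_f) \to \Sel^{\Sigma_0}_{\emptyset,0}(K,\boldA_f) \to \bigoplus_{v \in \Sigma_0} \frac{H^1(K_v,\boldA_f)}{H^1_{\Gr}(K_v,\boldA_f)} \to 0,
\]
surjectivity on the right being the standard Poitou--Tate consequence (the obstruction lies in a dual Selmer group that vanishes here). Dualizing, using that the characteristic ideal of the dual of the $v$-th term equals $(\scrP_v(f))$, and invoking Definition \ref{defn:local-lambda} with $\mu(\scrP_v(f)) = 0$, this yields $\mu^{\Sigma_0}_{\emptyset,0}(f) = \mu_{\emptyset,0}(f) = 0$ together with
\[
\lambda^{\Sigma_0}_{\emptyset,0}(f) = \lambda_{\emptyset,0}(f) + 2\sum_{\ell \mid N_1 N_2} \lambda_\ell(f).
\]

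Next I would feed in the positive-rank input. Summing the formula of Castella recalled before Lemma \ref{lem:loc_p} over height-one primes $\frakP \neq (\varpi)$ gives $\lambda_{\emptyset,0}(f) = 2\lambda(\coker \loc_\frakp) + \lambda(f)$, and comparing $\mu$-invariants (both $\mu_{\emptyset,0}(f)$ and $\mu(f)$ vanish) forces $\mu(\coker\loc_\frakp) = 0$. Under the hypothesis that $X(K,\boldA_f)$ has no nonzero finite submodule, one checks — using the relaxed local condition at $\frakp$, in the spirit of the appendix — that $X_{\emptyset,\Gr}(K,\boldA_f)$, and hence its submodule $\coker \loc_\frakp$ (via the exact sequence (\ref{eqn:global-dual})), also has no nonzero finite submodule; being $\mu$-trivial it is then $\calO$-torsion-free, so Lemma \ref{lem:loc_p}, which identifies $\coker(\loc_\frakp)/\varpi$ with $\coker(\overline{\loc}_\frakp)$, gives $\lambda(\coker\loc_\frakp) = \dim_\kappa \coker(\overline{\loc}_\frakp)$. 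Combining this with the display above,
\[
\lambda(f) + 2\sum_{\ell \mid N_1 N_2}\lambda_\ell(f) = \lambda^{\Sigma_0}_{\emptyset,0}(f) - 2\dim_\kappa \coker(\overline{\loc}_\frakp).
\]

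It then remains to see that both terms on the right depend only on $\bar{\rho}_f$ and on $\Sigma_0$. The residual map $\overline{\loc}_\frakp$ depends only on $\bar{\rho}_f$ by construction, so $\dim_\kappa \coker(\overline{\loc}_\frakp)$ does too. For the first term, at the primes of $\Sigma_0$ the imprimitive local conditions are full, so the control theorem (Theorem \ref{thm:control}) carries no error terms there; together with $\mu^{\Sigma_0}_{\emptyset,0}(f) = 0$ and the absence of finite submodules (which one propagates to $X^{\Sigma_0}_{\emptyset,0}(K,\boldA_f)$ as before), this gives $\lambda^{\Sigma_0}_{\emptyset,0}(f) = \dim_\kappa \Sel^{\Sigma_0}_{\emptyset,0}(K,\boldA_f[\varpi])$; but the right-hand side is the imprimitive residual BDP Selmer group, whose defining data — the residual representation $\bar{\rho}_f$, the split prime $\frakp$, and the set $\Sigma_0$ — is unchanged when $f$ is replaced by a residually isomorphic form. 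Applying the last display to $f_1$ and to $f_2$ and using $\bar{\rho}_{f_1} \simeq \bar{\rho}_{f_2}$ with the common set $\Sigma_0$, the two right-hand sides coincide, which is exactly the asserted identity.

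I expect the main obstacle to be the careful propagation of the "no nonzero finite submodule" hypothesis: one must verify that it passes from $X(K,\boldA_f)$ to $X_{\emptyset,\Gr}(K,\boldA_f)$, to $\coker\loc_\frakp$, and to the imprimitive dual $X^{\Sigma_0}_{\emptyset,0}(K,\boldA_f)$, so that in each case the relevant $\lambda$-invariant is read off exactly as a $\kappa$-dimension after reduction mod $\varpi$, not merely bounded by one. Tied to this are the two auxiliary points flagged above, namely the surjectivity on the right of the primitive-to-imprimitive sequence and the vanishing of the control-theorem error terms at the primes of $\Sigma_0$; both should follow from \ref{irr}, the admissibility conditions \ref{adm}, and the appendix, but they are where the technical weight of the argument lies.
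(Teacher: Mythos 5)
Your reduction is the same as the paper's: both arguments strip the rank-one part off via Castella's formula $\ord_{\frakP}X_{\emptyset,0}(K,\boldA_{f_i}) = 2\length_{\frakP}(\coker\loc_\frakp) + \ord_{\frakP}X(K,\boldA_{f_i})_{\tors}$ and use Lemma \ref{lem:loc_p} to see that the $\coker(\loc_\frakp)$ contribution is common to $f_1$ and $f_2$, so that everything comes down to comparing the BDP invariants $\lambda_{\emptyset,0}(f_1)$ and $\lambda_{\emptyset,0}(f_2)$. The difference is in how that last comparison is handled: the paper simply cites \cite[Corollary 3.8]{LMX23} for the identity $\lambda_{\emptyset,0}(f_1) + 2\sum_{\ell\mid N_1N_2}\lambda_\ell(f_1) = \lambda_{\emptyset,0}(f_2) + 2\sum_{\ell\mid N_1N_2}\lambda_\ell(f_2)$, whereas you reprove it by the Greenberg--Vatsal imprimitive-Selmer-group mechanism (Poitou--Tate surjectivity onto the local terms at $\Sigma_0$, identification of those local terms with the Euler factors $\scrP_v$, and a control theorem with no error terms at the relaxed primes, so that $\lambda^{\Sigma_0}_{\emptyset,0}(f)$ is read off as the $\kappa$-dimension of a residual Selmer group depending only on $\bar\rho_f$ and $\Sigma_0$). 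What your route buys is self-containedness and transparency about exactly where \ref{irr}, \ref{adm}, the $\mu=0$ hypothesis and the no-finite-submodule hypotheses enter; what it costs is that you must actually discharge the technical points you flag at the end, all of which \cite{LMX23} already handles. Those points are genuine but standard; note in particular that for $\lambda(\coker\loc_\frakp)=\dim_\kappa\coker(\overline{\loc}_\frakp)$ it is cleaner to observe that $\coker(\loc_\frakp)$ is a torsion quotient of $H^1(K_\frakp,\scrF^+\boldT)$, which is free of rank one over $\Lambda$ under \ref{adm}, hence $\coker(\loc_\frakp)\simeq\Lambda/(g)$ automatically has no nonzero finite submodule and is $\calO$-free once its $\mu$-invariant vanishes --- this avoids having to propagate the hypothesis on $X(K,\boldA)$ through $X_{\emptyset,\Gr}(K,\boldA)$.
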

\begin{proof}
	Recall that 
	\begin{equation*}
		\ord_{\frakP}{X_{\emptyset, 0}}(K, \boldA_{f_i}) = 2 \length_{\frakP}(\coker(\loc_{\frakp})) + \ord_{\frakP}{X(K, \boldA_{f_i})_{\tors}}
	\end{equation*}
    for $i \in \{1, 2\}$ where $\coker(\loc_{\frakp})$ only depends on $\bar{\rho}_{f_1} \simeq \bar{\rho}_{f_2}$ by Lemma \ref{lem:loc_p}. The result now follows from the conclusion of \cite[Corollary 3.8]{LMX23}:
	\[\lambda_{\emptyset, 0}(f_1) + 2 \sum_{\ell \mid N_1 N_2} \lambda_\ell(f_1) = \lambda_{\emptyset, 0}(f_2) + 2 \sum_{\ell \mid N_1 N_2} \lambda_\ell(f_2).\]
\end{proof}

\begin{rmk}
Note that \cite[Corollary 3.8]{LMX23} in the proof also applies under the weak Heegner hypothesis where some primes dividing $N_1 N_2$ are inert in $K$.
\end{rmk}

\begin{rmk}
Compared to the corrected formula (\ref{eqn:appendix}), our formula does not involve the incomptable error terms $c_\calL(\cdot).$ Our formula also resembles previous results in the cyclotomic \cite{GV00} and the anticyclotomic definite setting \cite{PW11, CKL17}. 
\end{rmk}

For a modular form $f \in S_2(\Gamma_0(N))$ associated with an elliptic curve $E/\Q$ of conductor $N$ that satisfies \ref{Heeg}, let $y_K$ denote the Heegner point in $E(K)$. That is, $y_K$ is the image of $[(\C/\calO_K, \frakN^{-1}/\calO_K)] \in X_0(N)$ under the modular parametrization $\phi: X_0(N) \rightarrow E$ sending $i \infty$ to the origin of $E$ where $\frakN \subset \calO_K$ is an integral ideal such that $\calO_K = \frakN\overline{\frakN}$. When $y_K$ has infinite order, $E(K)$ necessarily has rank $1$. This is a classical result of Kolyvagin \cite{Kol90}, later geneneralized by Kolyvagin-Logachev \cite{KL89} and Howard \cite{How04b}. Under this hypothesis, we give explicit conditions under which the module $X(K, \boldA)$ does not admit non-zero finite submodules.

\begin{thm} \label{thm:finite-sub} Suppose $y_K$ has infinite order in $E(K)$.  Suppose \[[E(K) \otimes \Z_p : \Z_p(y_K \otimes 1)] = \prod_{\ell|N} c_\ell(E/\Q)^{(p)}\] and that one of the following holds:
	\begin{enumerate}
		\item The prime $p$ is split in $\calO_K$.
		\item The prime $p$ is inert in $\calO_K$ and $\widetilde{E}(k_v)[p] = 1$ for the unique prime $v | p$ in $K$.
	\end{enumerate}
Then the module $X(K, \boldA)$ does not have any non-zero finite submodule.
\end{thm}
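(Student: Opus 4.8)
The plan is to determine $X := X(K,\boldA) = \Sel(K_\infty, E[p^\infty])^{\vee}$ as a $\Lambda$-module by computing its $\Gamma$-coinvariants, where $\Gamma = \Gal(K_\infty/K) = \overline{\langle \gamma \rangle} \cong \Z_p$; set $\Lambda = \Z_p\llbracket \Gamma \rrbracket$, $T = \gamma - 1$ and $\frakm = (p,T)$. By Howard \cite{How04b} (using \ref{Heeg} and that $y_K$ has infinite order) the module $X$ is finitely generated over $\Lambda$ of $\Lambda$-rank one. The reduction I would carry out is: \emph{it suffices to prove $X/TX \cong \Z_p$}. Granting this, $X/\frakm X \cong \F_p$, so $X$ is cyclic over $\Lambda$ by topological Nakayama; writing $X \cong \Lambda/I$, the rank-one condition forces $I = 0$ since $\Lambda$ is a domain, whence $X \cong \Lambda$ is free and in particular has no non-zero finite submodule. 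Dually, $X/TX \cong \Z_p$ is the assertion $\Sel(K_\infty, E[p^\infty])^{\Gamma} \cong \Q_p/\Z_p$.

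The next step computes $\Sel(K_\infty, E[p^\infty])^{\Gamma}$ by descent. Using the control theorem (Theorem \ref{thm:control}) in the spirit of the proof of Lemma \ref{lem:loc_p}, one identifies it, up to kernel and cokernel, with the Greenberg Selmer group $\Sel(K, E[p^\infty])$ over $K$, that is, the Selmer group with the unramified local condition at the primes dividing $N$. The relevant point is that the descent cokernel vanishes: it is a product of local contributions, which are finite at the primes $\ell \mid N$ (split in $K$ by \ref{Heeg}, hence finitely decomposed in $K_\infty$) and at the prime(s) $v \mid p$ are governed by $H^0(K_{\infty,v}^{nr}, \scrF^- A)$, i.e.\ by $\widetilde{E}(k_v)[p]$; in case (1) this term causes no obstruction because $p$ is split and the anticyclotomic tower is totally ramified above $p$ with residue field $\F_p$, and in case (2) it vanishes precisely by the hypothesis $\widetilde{E}(k_v)[p] = 1$ (the residual finite error at the bad primes is arranged to cancel). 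This yields $\Sel(K_\infty, E[p^\infty])^{\Gamma}\cong\Sel(K, E[p^\infty])$.

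It remains to show $\Sel(K, E[p^\infty]) \cong \Q_p/\Z_p$. Since $y_K$ has infinite order, Kolyvagin's theorem \cite{Kol90} gives $\mathrm{rank}_{\Z} E(K) = 1$ with $\Sha(E/K)[p^\infty]$ finite, so $E(K)\otimes\Q_p/\Z_p\cong\Q_p/\Z_p$ and $\Sel(K, E[p^\infty])$ has corank one. The refined form of Kolyvagin's bound — equivalently, via Gross--Zagier, the $p$-part of the Birch--Swinnerton-Dyer formula for $E/K$ in analytic rank one — relates $\ord_p$ of the Heegner index $[E(K)\otimes\Z_p : \Z_p(y_K\otimes 1)]$ to $\ord_p\,\#\Sha(E/K)[p^\infty]$ and the $p$-parts of the Tamagawa numbers $c_\ell(E/\Q)$; the hypothesis $[E(K)\otimes\Z_p : \Z_p(y_K\otimes 1)] = \prod_{\ell\mid N}c_\ell(E/\Q)^{(p)}$ is exactly the equality that forces $\Sha(E/K)[p^\infty] = 0$ and, simultaneously, matches the unramified (Greenberg) local conditions at the primes dividing $N$ with the Bloch--Kato ones. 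The descent sequence $0\to E(K)\otimes\Q_p/\Z_p\to\Sel(K, E[p^\infty])\to\Sha(E/K)[p^\infty]\to 0$ then collapses to $\Sel(K, E[p^\infty])\cong\Q_p/\Z_p$, which combined with the previous paragraph gives $X/TX\cong\Z_p$ and finishes the proof.

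The main obstacle is the third step: extracting both $\Sha(E/K)[p^\infty] = 0$ and the matching of the unramified and Bloch--Kato local conditions at the bad primes from the single index identity requires the sharp form of Kolyvagin's argument together with careful control of the Tamagawa factors, and this is where the precise shape of the hypothesis is essential. The remaining delicate points are the descent computation at the prime above $p$ in the inert case — where the anticyclotomic tower ramifies differently than over the split primes, so that $\widetilde{E}(k_v)[p] = 1$ is genuinely needed — and checking that the finite error terms in the descent really vanish, so that $X/TX$ is exactly $\Z_p$ rather than $\Z_p$ up to finite groups; the concluding ``cyclic plus rank one implies free'' step is purely formal.
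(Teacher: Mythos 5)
The paper does not actually prove Theorem \ref{thm:finite-sub}; it is imported from the author's earlier work (cited as \cite[Theorem 4.19]{NS25}), so there is no internal proof to match your argument against. Judged on its own terms, your argument has a fatal flaw: it proves too much. Your reduction ``$X/TX\cong \Z_p$ $\Rightarrow$ $X$ cyclic $\Rightarrow$ $X\cong\Lambda$'' would show that $X(K,\boldA)$ is \emph{free} of rank one, hence that $X(K,\boldA)_{\tors}=0$ and $\lambda(E)=0$, for every curve satisfying the hypotheses. The paper's own numerical example contradicts this: $E_2=817b1$ with $p=5$ and $K=\Q(\sqrt{-51})$ satisfies exactly the hypotheses of Theorem \ref{thm:finite-sub} (the Heegner index and the $5$-part of $\prod_\ell c_\ell$ are both equal to $5$, and $p$ splits in $K$), yet $\lambda(E_2)=2$. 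The theorem's conclusion (no nonzero \emph{finite} submodule) is deliberately weaker than freeness and is compatible with nontrivial $\Lambda$-torsion; any correct proof must use a criterion for the absence of finite submodules that does not force $X\cong\Lambda$ (e.g.\ a Greenberg-style argument via surjectivity of a global-to-local map, which is presumably the route taken in \cite{NS25}).

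The step where your argument breaks is the descent computation $X/TX\cong\Z_p$. The hypotheses permit $p\mid c_\ell(E/\Q)$ for some $\ell\mid N$ (this happens for $E_2$), and in that case two of your claims fail simultaneously. First, the Greenberg Selmer group $\Sel(K,E[p^\infty])$, defined with the unramified condition at primes dividing $N$, is in general strictly larger than the classical Selmer group $E(K)\otimes\Q_p/\Z_p$ when $\Sha(E/K)[p^\infty]=0$; the discrepancy at each bad prime is measured precisely by the $p$-part of the Tamagawa number, so $\Sel(K,E[p^\infty])$ can acquire a finite piece of order divisible by $p$. Second, the control-theorem error terms at the finitely decomposed primes above $N$ (of the shape $H^1(K_{\infty,v}/K_v, E(K_{\infty,v})[p^\infty])$) need not vanish for the same reason, and your assertion that ``the residual finite error at the bad primes is arranged to cancel'' is exactly what is not true. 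Consequently $X/TX$ can be $\Z_p\oplus(\text{nontrivial finite})$, Nakayama then gives at least two generators, and the ``cyclic plus rank one implies free'' endgame never applies. The hypothesis relating the Heegner index to $\prod_\ell c_\ell(E/\Q)^{(p)}$ is not there to kill these contributions but to balance them against the Selmer/Sha data in whatever criterion \cite{NS25} actually employs.
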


As a result, one obtains the following corollary of Theorem \ref{thm:lambda-inv} for elliptic curves:

\begin{cor} \label{cor:ellcurves}
	Suppose that $E_1/\Q, E_2/\Q$ are elliptic curves of respective conductors $N_1, N_2$ which satisfy \ref{Heeg}, \ref{adm} and \ref{irr} and the hypotheses of Theorem \ref{thm:finite-sub}. Assume that $E_1[p] \simeq E_2[p]$ as $G_\Q$-modules and $\mu(E_1)  = \mu(E_2) = 0$. Then 
	\begin{equation*}
		\lambda(E_1) + 2 \sum_{\ell \mid N_1 N_2} \lambda_\ell (E_1) = \lambda(E_2) + 2 \sum_{\ell \mid N_1 N_2} \lambda_\ell (E_2).
	\end{equation*}
\end{cor}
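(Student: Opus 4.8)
The plan is to obtain the corollary as a formal consequence of Theorem \ref{thm:lambda-inv} together with Theorem \ref{thm:finite-sub}; the only real work is to translate the elliptic-curve hypotheses into the modular-forms language of Section \ref{sec:prelim}.

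First I would attach to each $E_i/\Q$, by modularity, the normalized newform $f_i \in S_2(\Gamma_0(N_i))$ with $L(f_i,s)=L(E_i,s)$. Here $r_i=1$, $\mathfrak{F}=\Q_p$, $\calO=\Z_p$ and $\varpi=p$, and $f_i$ is ordinary at $p$ as part of the running setup (so $E_i$ has good ordinary reduction at $p$, compatibly with $p\nmid N_i$ coming from \ref{adm}). Under the standard normalization one has $T_{f_i}\simeq T_p(E_i)$, $A_{f_i}\simeq E_i[p^\infty]$ and $\bar\rho_{f_i}\simeq E_i[p]$ as $G_\Q$-modules, so that $\lambda(E_i)=\lambda(f_i)$, $\mu(E_i)=\mu(f_i)$, and the local constants $\lambda_\ell(E_i)$ coincide with the $\lambda_\ell(f_i)$ of Definition \ref{defn:local-lambda}. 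In particular the hypotheses \ref{Heeg}, \ref{adm}, \ref{irr} imposed on the $E_i$ are exactly those for the $f_i$, and $X(K,\boldA)$ for $f_i$ is the dual Selmer group appearing in both theorems.

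Next I would verify the hypotheses of Theorem \ref{thm:lambda-inv} for the pair $(f_1,f_2)$. Conditions \ref{Heeg}, \ref{adm}, \ref{irr} hold by assumption; the given isomorphism $E_1[p]\simeq E_2[p]$ of $G_\Q$-modules gives $\bar\rho_{f_1}\simeq\bar\rho_{f_2}$; and $\mu(E_1)=\mu(E_2)=0$ gives $\mu(f_1)=\mu(f_2)=0$. The remaining hypothesis is that neither $X(K,\boldA_{f_1})$ nor $X(K,\boldA_{f_2})$ has a non-zero finite submodule, and this is supplied by Theorem \ref{thm:finite-sub}: since $p$ is split in $\calO_K$ in our situation we are in case (1) of that theorem, whose remaining hypotheses --- that $y_K$ has infinite order in $E_i(K)$ and that the index $[E_i(K)\otimes\Z_p:\Z_p(y_K\otimes 1)]$ equals $\prod_{\ell\mid N_i}c_\ell(E_i/\Q)^{(p)}$ --- are among those assumed in the corollary. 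Applying Theorem \ref{thm:lambda-inv} to $f_1,f_2$ then yields
\[\lambda(f_1)+2\sum_{\ell\mid N_1N_2}\lambda_\ell(f_1)=\lambda(f_2)+2\sum_{\ell\mid N_1N_2}\lambda_\ell(f_2),\]
and reading this through the dictionary of the previous paragraph gives the claimed identity for $E_1,E_2$.

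I do not expect a genuine obstacle here: the one piece of real content, namely the absence of non-zero finite submodules of the dual Selmer group, has been isolated into Theorem \ref{thm:finite-sub}, and once that is granted the corollary is purely formal. The only points deserving a moment's care are the compatibility of normalizations (the Tate twist in the definition of $V_f$, the identification of residue fields $\kappa\simeq\F_p$, and the precise shape of $\lambda_\ell$) on the two sides of the translation, and the harmless observation --- implicitly used via Lemma \ref{lem:loc_p} --- that ``$X(K,\boldA)$ has no non-zero finite submodule'' is equivalent to ``$X(K,\boldA)_{\tors}$ has no non-zero finite submodule'', since any finite $\Lambda$-submodule of $X(K,\boldA)$ is automatically $\Lambda$-torsion.
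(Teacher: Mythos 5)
Your proposal is correct and matches the paper's (implicit) argument exactly: the corollary is stated as an immediate consequence of combining Theorem \ref{thm:finite-sub}, which supplies the no-finite-submodule hypothesis for $X(K,\boldA_{E_i})$, with Theorem \ref{thm:lambda-inv} applied to the associated weight-two newforms. The translation between the elliptic-curve and modular-form normalizations that you spell out is exactly the routine dictionary the paper leaves implicit.
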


\section{Numerical example} \label{sec:ex}

In the following example, we use SageMath \cite{sagemath} to verify the hypotheses of Corollary \ref{cor:ellcurves}. Our computation produces an instance where the $\lambda$-invariant is positive.

\begin{ex}
	Let $K = \Q(\sqrt{-51})$ and $p = 5$, which is split in $K$. Consider the elliptic curves $E_1$ and $E_2$ with Cremona labels $19a1$ and $817b1$, respectively. One can check that $E_1[p] \simeq E_2[p]$ by looking at the coefficients of the respective modular forms up to Sturm's bound \cite{Stu87}. The curve $E_1 = 19a1$ satisfies the hypotheses of \cite[Theorem 0.16]{MN19} and therefore $\Sel(E_1/K_\infty)$ is co-free, which implies $\lambda(E_1) = 0$. These hypotheses for $E_1$ can be verified using SageMath \cite{sagemath}:
	\begin{enumerate}[(a)]
		\item $E_1(K)[5] = 0$. In fact, $E_1(K)$ has trivial torsion,
		\item $p \nmid N_1 \cdot a_p(E_1) \cdot (a_p(E_1) - 1) \cdot c_{\Tam}(E_1/\Q)$,
		\begin{itemize}
			\item $a_p(E_1) = 2$,
			\item $\prod_{\ell \mid N_1} c_\ell (E_1/\Q) = 1$,
		\end{itemize}
		\item $y_K(E_1)$ has infinite order,
		\item $rk_{\Z} E_1(K) = 1$ and $\Sha(E_1/K)[5^\infty] = 0$.
	\end{enumerate}
	Both $E_1/K$ and $E_2/K$ satisfy the admissibility conditions in \cite{HL21}, namely they are ordinary at $p = 5$ and
	\begin{itemize}
		\item $5 \nmid 6N_i \phi(N_i) h_K$
		\item $a_5(E_i)^2 \not \equiv 1 \pmod{5}$,
	\end{itemize}
	for each $i \in \{1, 2\}$.
	Moreover, $E_2$ satisfies the hypotheses of \cite[Theorem 4.19]{NS25}:
	\[[E_2(K) : y_K(E_2)] = \prod_{\ell \mid N_2} c_\ell(E_2/\Q) = 10.\]
	Thus, $X(E_2/K_\infty)$ has no non-zero finite submodule. Applying Theorem \ref{thm:lambda-inv}, 
	\begin{equation} \label{eqn:cong}
		\lambda(E_1) + 2 \sum_{\ell \mid N_1 N_2} \lambda_\ell (E_1) = \lambda(E_2) + 2 \sum_{\ell \mid N_1 N_2} \lambda_\ell (E_2).
	\end{equation}
    
	Now, we compute the $\lambda$-invariants of the Euler factors $\lambda_\ell(E_i)$. In the same manner as \cite[Proposition 2.4]{GV00}, if $(\ell) = v \bar{v}$ then $\lambda_\ell(E_i) =  s_\ell d_\ell(E_i)$ where $s_\ell$ is the number of primes above $v$ in $K_\infty$  and $d_\ell(E_i)$ is the $\lambda$-invariant of $\scrP_v(f)$ as an element of $\Z_p \llbracket \Gal(K_{\infty, v}/K_v) \rrbracket$ (see Definition \ref{defn:Euler} and the discussion before Theorem \ref{thm:lambda-inv}).
    
	For $\ell = 19$, both $E_1$, $E_2$ have split-multiplicative reduction and $\ell^{-1}$ is not a root of $1 - X \pmod{5}$ so $\lambda_\ell(E_i) = 0$ for $i \in \{1, 2\}$. 
    
    For $\ell = 43$, $E_1$ has good ordinary reduction while $E_2$ has split-multiplicative reduction. It is again straightforward that $ \lambda_\ell(E_2) = 0.$ Now, $1 - a_{43}(E_1) X + 43 X^2 = 1 + X + 43X^2$ admits $\ell^{-1} \pmod{p}$ as a root with multiplicity $d_\ell(E_1) = 1$; $s_\ell$ can be computed by understanding prime decomposition in the anticyclotomic extension using \cite[Theorem 2(a)]{Bri07}. We show that $s_\ell = 1$ for our example. The imaginary quadratic field $K$ has class number $h_K = 2$ and we write $\ell^{h_K}$ in the form $a^2 + ab + ((D + 1)/4) b^2$ where $a = 12$ and $b = 11.$ Let $\omega = (\sqrt{-D} + 1)/2$. For our class number $h_K = 2$, $s_\ell$ is simply given by the largest power of $p$ dividing $b^\ast/ p$ where $(a + b \omega)^{p - 1} = a^\ast + b^\ast \omega$. In this case, $b^\ast = -952105$ and therefore $s_\ell = 1$. Hence, $\lambda_\ell(E_1) = 1.$

    We may then conclude from identity (\ref{eqn:cong}) that $\lambda(E_2) = 2$.
\end{ex}

\appendix \section{(by Jeffrey Hatley and Antonio Lei)} \label{sec:appendix}

There is a mistake in \cite[Section 4]{HL21}; note that no other sections of the paper are affected by this mistake. For a discussion of a similar oversight, see \href{https://www.math.ntu.edu.tw/~mlhsieh/research/erratum.pdf}{this note}.

The mistake occurs in Theorem 4.4 of \cite{HL21}; namely, the conclusion that the map
\begin{equation}\label{eq:isom}
	\Sel_\mathcal{L}(K_n,A_m) \rightarrow \Sel_\mathcal{L}(K_n,A)[\varpi^m]
\end{equation}
is an isomorphism might not hold when there are primes $\ell \mid N$ for which $A[\varpi]$ is unramified; in general, this map is only an injection with finite cokernel. The existence of primes dividing $N$ for which the residual Galois representation is unramified may occur when comparing two modular forms $f$ and $g$ of different levels with isomorphic residual representations, which is precisely the context of \cite[$\S$~4]{HL21}. Because of this, the $\lambda$-invariant formula in Theorem 4.6 is incorrect as stated.

By a result of \cite{Car86}, the representation attached to $f$ satisfies
\[
\rho_f|_{G_{\Q_\ell}}\sim\begin{pmatrix}
	\epsilon_\cyc&*\\0&1
\end{pmatrix},
\]
where $\ell$ is a prime not dividing $pN$. In the proof of \cite[Theorem 4.4]{HL21}, we erroneously claimed that this implies that $A^{I_v}$ is divisible, where $v$ is a prime of $K$ lying above $\ell$. For example, if $*$ represents a function on $I_v$ whose image lies inside $\varpi \calO$, but not $\varpi^2\calO$, then $$ A^{I_v}=\mathfrak{F}/\calO+\frac{1}{\varpi}\calO/\calO,$$
which is not divisible.

We record a corrected version of Theorem 4.4:

\begin{thm} \label{thm:control}
    The map \[\Sel_{\scrL}(K, \boldA[\varpi]) \rightarrow \Sel_{\scrL}(K, \boldA)[\varpi]\] is injective and has finite cokernel of $\kappa$-dimension
    \[\sum_{v\in\Sigma_0}\dim_\kappa A^{I_{\infty, v}}/\varpi A^{I_{\infty, v}}\]
\end{thm}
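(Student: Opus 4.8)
The goal is a control theorem comparing the residual Selmer group $\Sel_{\scrL}(K,\boldA[\varpi])$ with the $\varpi$-torsion of $\Sel_{\scrL}(K,\boldA)$. The natural strategy is the standard Greenberg–style snake-lemma argument: apply the functor $(-)[\varpi]$ and $(-)/\varpi$ to the defining exact sequence of the Selmer groups, sitting inside the commutative diagram whose rows are the global cohomology group mapping into the product of local quotients. More precisely, I would start from the multiplication-by-$\varpi$ sequence $0 \to \boldA[\varpi] \to \boldA \xrightarrow{\varpi} \boldA \to 0$ of $\Lambda[G_\Sigma]$-modules, take $G_\Sigma$-cohomology to get
\[
0 \to H^0(K_\Sigma/K,\boldA)/\varpi \to H^1(K_\Sigma/K,\boldA[\varpi]) \to H^1(K_\Sigma/K,\boldA)[\varpi] \to 0,
\]
and note that $H^0(K_\Sigma/K,\boldA)$ is $\varpi$-divisible under \ref{irr} (the residual representation is irreducible, so there are no $G_\Q$-invariants in $\boldA[\varpi]$), which forces the left term to vanish and gives an isomorphism $H^1(K_\Sigma/K,\boldA[\varpi]) \simeq H^1(K_\Sigma/K,\boldA)[\varpi]$ on global cohomology. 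The content of the theorem is therefore entirely local: it measures the failure of the analogous statement for the local quotients $H^1(K_v,\boldA)/H^1_{\scrL_v}(K_v,\boldA)$.

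Next I would set up the comparison diagram
\begin{center}
\begin{tikzcd}[column sep=small]
0 \arrow[r] & \Sel_{\scrL}(K,\boldA[\varpi]) \arrow[r] \arrow[d] & H^1(K_\Sigma/K,\boldA[\varpi]) \arrow[r] \arrow[d,"\wr"] & \prod_v \dfrac{H^1(K_v,\boldA[\varpi])}{H^1_{\scrL_v}(K_v,\boldA[\varpi])} \arrow[d] \\
0 \arrow[r] & \Sel_{\scrL}(K,\boldA)[\varpi] \arrow[r] & H^1(K_\Sigma/K,\boldA)[\varpi] \arrow[r] & \prod_v \dfrac{H^1(K_v,\boldA)}{H^1_{\scrL_v}(K_v,\boldA)}[\varpi]
\end{tikzcd}
\end{center}
where the middle vertical arrow is the isomorphism just established. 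By the snake lemma, the kernel of the left vertical map injects into the kernel of the right vertical map (which is zero on the relevant component by the iso in the middle, so injectivity is immediate), and the cokernel of the left vertical map is controlled by the kernels of the local maps $H^1(K_v,\boldA[\varpi]) \to H^1(K_v,\boldA)[\varpi]$ modulo $H^1_{\scrL_v}$-conditions. So I would then compute, prime by prime in $\Sigma$, the local defect. For $v \mid p$ with $\scrL_v \in \{\Gr, \emptyset, 0\}$ one uses that the Greenberg condition is cut out by $\scrF^-\boldA$ and the relevant $H^0(K_v^{nr}, \scrF^-\boldA)$ vanishes under \ref{adm} (this is exactly the trivial-cokernel input used at the end of the proof of Lemma \ref{lem:loc_p}); for $v \nmid p$, $v \mid N$, the unramified (finite) local condition has defect governed by $H^0$ of the inertia-invariants, and the computation $H^1_{\mathrm{ur}}(K_v,\boldA[\varpi]) \to H^1_{\mathrm{ur}}(K_v,\boldA)[\varpi]$ has cokernel $A^{I_v}/\varpi$ (by the usual identification $H^1_{\mathrm{ur}}(K_v, M) \simeq M^{I_v}/(\Fr_v - 1)$ and the fact that $\Fr_v - 1$ acts invertibly on a free $\kappa$-module only if there are no inertia-frobenius invariants — here the anticyclotomic $\Lambda$-coefficients make $(\Fr_v-1)$ topologically nilpotent so the cokernel is the full $A^{I_v}/\varpi$); at $v \mid \infty$ everything is $2$-torsion and vanishes under \ref{adm} since $p$ is odd. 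Summing the local contributions over $\Sigma_0$ (the bad primes away from $p\infty$) yields the claimed $\kappa$-dimension.

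The main obstacle, and the place the erratum's correction really bites, is the local computation at the primes $v \mid N$ for which $\boldA[\varpi]$ is \emph{unramified} — exactly the situation flagged in the erratum where $A^{I_v}$ need not be divisible. Here one cannot simply say the local condition is ``empty'' residually; one must carefully track the inclusion $H^1_{\mathrm{ur}}(K_v,\boldA[\varpi]) \hookrightarrow H^1(K_v,\boldA[\varpi])$ versus $H^1_{\mathrm{ur}}(K_v,\boldA)[\varpi] \hookrightarrow H^1(K_v,\boldA)[\varpi]$ and compute the resulting discrepancy via the long exact sequence of $0 \to \boldA[\varpi] \to \boldA \to \boldA \to 0$ restricted to $K_v$ and to $K_v^{nr}$, using that $H^1(K_v^{nr}/K_v, -) = (-)_{\Fr_v}$ and that in the anticyclotomic tower $\Fr_v - 1$ generates an ideal of $\Lambda$ of finite index over which these coinvariants become the stated finite group. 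I would therefore devote the bulk of the written proof to this local lemma: identifying $\coker\big(H^1_{\scrL_v}(K_v,\boldA[\varpi]) \to (H^1(K_v,\boldA)/H^1_{\scrL_v}(K_v,\boldA))[\varpi] \big)$ cohomology with $A^{I_{\infty,v}}/\varpi A^{I_{\infty,v}}$, and checking that all other local and global contributions vanish under the running hypotheses \ref{Heeg}, \ref{adm}, \ref{irr}, so that the snake lemma delivers precisely the sum over $\Sigma_0$.
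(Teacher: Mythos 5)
Your proposal is correct and follows essentially the same route as the paper: the snake lemma on the comparison diagram, with the middle vertical map an isomorphism by \ref{irr}, the $v\mid p$ local kernels vanishing by the hypotheses on $\scrF^{-}A$ and the unramified-times-cyclotomic shape of $\rho_f|_{G_{\Q_p}}$, and the $v\nmid p$ contribution equal to $A^{I_{\infty,v}}/\varpi A^{I_{\infty,v}}$. The only stylistic difference is that the paper gets the local term at $v\nmid p$ directly as the coboundary $H^0(K_{\infty,v}^{nr},A)/\varpi$ in the long exact sequence of $0\to A[\varpi]\to A\to A\to 0$ over inertia, which is cleaner than your detour through $H^1_{\mathrm{ur}}(K_v,M)\simeq M^{I_v}/(\Fr_v-1)$.
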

\begin{proof}
Examine the diagram
\begin{center}
		\begin{tikzcd}[cramped, column sep = small]
            {H^1(K_\infty, A[\varpi])} \arrow[r] \arrow[d] & \prod_{v \mid \Sigma, v \nmid p} H^1(K_{
            \infty, v}^{nr}, A[\varpi]) \times \prod_{v \mid p} \frac{H^1(K_{\infty, v}, A[\varpi])}{H^1_{\scrL_v}(K_{\infty, v}, A[\varpi])} \arrow[d, "\gamma"]
            \\
            {H^1(K_\infty, A)[\varpi]} \arrow[r]           & \prod_{v \mid \Sigma, v \nmid p} H^1(K_{
            \infty, v}^{nr}, A) \times \prod_{v \mid p} \frac{H^1(K_{\infty, v}, A)}{H^1_{\scrL_v}(K_{\infty, v}, A)}. 
		\end{tikzcd}
	\end{center}
    
    The tautological exact sequence $0 \rightarrow A[\varpi] \rightarrow A \xrightarrow{\times \varpi} A \rightarrow 0$ implies that the kernel of $H^1(K_\infty, A[\varpi]) \rightarrow H^1(K_\infty, A)[\varpi]$ is isomorphic to the image of $H^0(K_\infty, A)$ in the corresponding long exact sequence in cohomology. The cohomology $H^0(K_\infty, A)$ vanishes under hypothesis \ref{irr}. Hence, the map $H^1(K_\infty, A[\varpi]) \rightarrow H^1(K_\infty, A)[\varpi]$ is injective. It follows that $\Sel_{\scrL}(K, \boldA[\varpi]) \rightarrow \Sel_{\scrL}(K, \boldA)[\varpi]$ is also injective. 
    
    Moreover, it is immediate from the long exact sequence in cohomology that $H^1(K_\infty, A[\varpi]) \rightarrow H^1(K_\infty, A)[\varpi]$ is surjective. The cokernel of 
    \[\Sel_{\scrL}(K, \boldA[\varpi]) \rightarrow \Sel_{\scrL}(K, \boldA)[\varpi]\]
    is isomorphic to the kernel of $\gamma$ via the snake lemma. At each prime $v \in \Sigma,$, denote by $\gamma_v$ the component of $\gamma$ at the prime $v$. The kernel of each $\gamma_v$ is given by
    \[\ker(\gamma_v) = \begin{cases} A^{I_{\infty, v}} /\varpi A^{I_{\infty, v}} & \text{if } v \nmid p, \\
    H^0(K_{\infty, v}, A) & \text{if } v \mid p, \text{ or} \\
    H^0(K_{\infty, v}, \scrF^{-} A) & \text{if } v \mid p.
    \end{cases}\]

    It is known that $H^0(K_v, \scrF^{-} A) = 0$ \cite[Lemma 1.8]{CH15}, which uses the hypothesis $a_p(f)^2 \not \equiv 1 \pmod{p}$ for weight $2$ forms. For the remaning local condition above $p$, we use the description
    \[\rho_{f \mid {G_{\Q_p}}} \sim \begin{pmatrix} \chi_p^{-1}  \epsilon^{r} & \ast \\ 0 & \chi_p \epsilon^{1 - r}\end{pmatrix}\]
    where $\chi_p$ is unramified and $\epsilon$ is the $p$-cyclotomic character \cite{CH15} to conclude that $H^0(K_{\infty, v}, A) = 0.$
    
\end{proof}
    
A corrected version of Theorem 4.4 should read as follows:
\[
\dim_\kappa\Sel_\mathcal{L}(K_\infty,A_\star)[\varpi]+\sum_{v\in\Sigma_0}\dim_\kappa A_\star^{I_v}/\varpi A_\star^{I_v}=\dim_\kappa\Sel(K_\infty,A_\star[\varpi])
\]
for $\star\in\{f,g\}$.

On the one hand, $\overline\rho_f\cong\overline\rho_g$ implies that $\dim_\kappa\Sel(K_\infty,A_f[\varpi])=\dim_\kappa\Sel(K_\infty,A_g[\varpi])$.
On the other hand, from Proposition 3.2 it follows that 
\[
\dim_\kappa\Sel_\mathcal{L}(K_\infty,A_\star)[\varpi]=\lambda(\Sel_\mathcal{L}(K_\infty,A_\star))+c_\calL(\star).
\]
Therefore, combining these equations gives:

\begin{equation} \label{eqn:appendix}
\lambda_\calL(f)+c_\calL(f)+\sum_{v\in\Sigma_0}\dim_\kappa A_f^{I_v}/\varpi A_f^{I_v}=
\lambda_\calL(g)+c_\calL(g)+\sum_{v\in\Sigma_0}\dim_\kappa A_g^{I_v}/\varpi A_g^{I_v}.
\end{equation}
\section*{Acknowledgements}
The author would like to thank his supervisors Antonio Lei and Sujatha Ramdorai for suggesting the problem in this paper. Sujatha raised the question of how it can further be investigated from the lens of Hida families, which is an interesting topic we will reserve for the future.

\bibliographystyle{plain}
\bibliography{refs.bib}          
%
%

\end{document}